%
\documentclass[12pt, reqno]{amsart}
\usepackage{amsmath, amsthm, amscd, amsfonts, amssymb, graphicx, color}
\usepackage[bookmarksnumbered, colorlinks, plainpages]{hyperref}

\input{mathrsfs.sty}

\textheight 22.5truecm \textwidth 16truecm
\setlength{\oddsidemargin}{0.02in}\setlength{\evensidemargin}{0.02in}

\setlength{\topmargin}{-.5cm}

\newtheorem{theorem}{Theorem}[section]
\newtheorem{lemma}[theorem]{Lemma}
\newtheorem{proposition}[theorem]{Proposition}
\newtheorem{corollary}[theorem]{Corollary}
\theoremstyle{definition}

\theoremstyle{remark}
\newtheorem{remark}[theorem]{Remark}
\numberwithin{equation}{section}
\begin{document}

\title {Reverses and variations of Heinz inequality}

\author[M. Bakherad, M.S. Moslehian]{Mojtaba Bakherad$^1$ and Mohammad Sal Moslehian$^2$}

\address{$^{1}$ Department of Pure Mathematics, Ferdowsi University of Mashhad, P. O. Box 1159, Mashhad 91775, Iran}
\email{Mojtaba.Bakherad@yahoo.com; bakherad@member.ams.org}

\address{$^2$ Department of Pure Mathematics, Center of Excellence in
Analysis on Algebraic Structures (CEAAS), Ferdowsi University of
Mashhad, P. O. Box 1159, Mashhad 91775, Iran}
\email{moslehian@um.ac.ir, moslehian@member.ams.org}

\subjclass[2010]{Primary 47A63, Secondary 47A60, 15A60, 15A42.}

\keywords{Heinz inequality, Hilbert-Schmidt norm, operator mean, Hadamard product.}
\begin{abstract}
Let $A, B$ be positive definite $n\times n$ matrices. We present several reverse Heinz type inequalities, in particular
\begin{align*}
\|AX+XB\|_2^2+ 2(\nu-1) \|AX-XB\|_2^2\leq \|A^{\nu}XB^{1-\nu}+A^{1-\nu}XB^{\nu}\|_2^2,
 \end{align*}
where $X$ is an arbitrary $n \times n$ matrix, $\|\cdot\|_2$ is Hilbert-Schmidt norm and $\nu>1$. We also establish a Heinz type inequality involving
the Hadamard product of the form
\begin{align*}
2|||A^{1\over2}\circ B^{1\over2}|||\leq|||A^{s}\circ B^{1-t}+A^{1-s}\circ B^{t}|||
\leq\max\{|||(A+B)\circ I|||,|||(A\circ B)+I|||\},
\end{align*}
in which $s, t\in [0,1]$ and $|||\cdot|||$ is a unitarily invariant norm.
 \end{abstract} \maketitle
\section{Introduction and preliminaries}
Let ${\mathbb B}({\mathscr H})$ denote the $C^*$-algebra of all
bounded linear operators on a complex Hilbert space ${\mathscr H}$.
In the case when ${\rm dim}{\mathscr H}=n$, we identify ${\mathbb
B}({\mathscr H})$ with the  matrix algebra $\mathbb{M}_n$ of all
$n\times n$ matrices with entries in the complex field $\mathbb{C}$.
An operator $A\in{\mathbb B}({\mathscr H})$ is called positive
(positive semidefinite for matrices) if $\langle Ax,x\rangle\geq0$
for all $x\in{\mathscr H }$. The set of all positive invertible operators (respectively, positive definite matrices) is denoted by ${\mathbb B}({\mathscr H})_{++}$ (respectively, $\mathcal{P}_n$ ).

The Gelfand map $f(t)\mapsto f(A)$ is an
isometrically $*$-isomorphism between the $C^*$-algebra
$C(\sigma(A))$ of all continuous functions on the spectrum
$\sigma(A)$ of a selfadjoint operator $A$ and the $C^*$-algebra
generated by $A$ and the identity operator $I$ such that If $f, g\in
C(\sigma(A))$, then
$f(t)\geq g(t)\,\,(t\in\sigma(A))$ implies that $f(A)\geq g(A)$.

If $\{e_j\}$ is an orthonormal basis of ${\mathscr H}$, $V:{\mathscr H}\to
{\mathscr H}\otimes{\mathscr H}$ is the isometry defined by
$Ve_j=e_j\otimes e_j$ and $A\otimes B$
is the tensor product of operators $A, B$, then the Hadamard product $A\circ
B$ regarding to $\{e_j\}$ is expressed by $A\circ B=V^*(A\otimes B)V$.

A unitarily invariant norm $|||\cdot|||$ is defined on a norm ideal
$\mathfrak{L}_{|||\cdot|||}$ of ${\mathbb B}({\mathscr H})$
associated with it and has the property $|||UXV|||=|||X|||$, where
$U$ and $V$ are arbitrary unitaries in ${\mathbb B}({\mathscr H})$ and $X\in\mathfrak{L}_{|||\cdot|||}$. A compact operator
$A\in{\mathbb B}({\mathscr H})$ is called Hilbert-Schmidt if $\|A\|_2=\left(\sum_{j=1}^{\infty}s_j^2(A)\right)^{1/2}<\infty$,
where $s_1(A), s_2(A), \cdots$ are the singular values of $A$, i.e., the eigenvalues
of the positive operator $|A|=(A^*A)^{1\over2}$ enumerated as
$s_1(A)\geq s_2(A)\geq\cdots$ with their multiplicities counted. The
Hilbert-Schmidt norm is a unitarily invariant norm. For
$A=[a_{ij}]\in\mathbb{M}_n$, it holds that $\|A\|_2=\Big{(}\sum_{i,j=1}^n|a_{i,j}|^2\Big{)}^{1/2}$.
For two operators $A, B\in{\mathbb B}({\mathscr H})_{++}$, let $A\sharp_\mu B=A^{\frac{1}{2}}\left(A^{\frac{-1}{2}}BA^{\frac{-1}{2}}\right)^{\mu}A^{\frac{1}{2}}\,\,(\mu\in\mathbb{R})$. The operators
$A\sharp_{\frac{1}{2}} B$ and  $A\nabla B={A+B\over2}$ are called the operator geometric mean and the operator arithmetic mean, respectively.

The Heinz mean is defined by
\begin{align*}
H_\nu(a,b)={a^\nu b^{1-\nu}+a^{1-\nu}b^\nu\over 2}\qquad(0\leq \nu \leq1,\, a,b >0).
 \end{align*}
 The function $H_\nu$ is symmetric about the point $\nu={1\over2}$. Note that $H_0(a,b)=H_1(a,b)={a+b\over2}$,
$H_{1/2}(a,b)=\sqrt{ab}$ and $H_{1/2}(a,b)\leq H_\nu(a,b)\leq H_0(a,b)$
 for all $\nu\in[0,1]$.

The Heinz norm (double) inequality, which is one of the essential inequalities in operator theory, states that for any positive operators $A, B\in
{\mathbb B}({\mathscr H})$, any operator $X \in {\mathbb B}({\mathscr H})$ and
any $\nu\in[0,1]$, the double inequality
\begin{align}\label{purpos}
2\|A^{1\over2}XB^{1\over2}\|\leq \|A^\nu XB^{1-\nu}+A^{1-\nu}XB^\nu\|\leq \|AX+XB\|
\end{align}
holds; see \cite{13}. Bhatia and Davis \cite{davis} proved that \eqref{purpos} is valid for any unitarily invariant norm.
Fujii et al. \cite{1010} proved
that the right hand side inequality at \eqref{purpos} is equivalent to several other norm
inequalities such as\\
(i) the McIntosh inequality \cite{79.AM} asserting that
$\|A^*AX+XB^*B\| \ge 2\|AXB^*\|$ for all $A,B,X\in {\mathbb B}({\mathscr H})$;\\
(ii) the Corach--Porta--Recht inequality
$\|AXA^{-1}+A^{-1}XA\|\geq 2\|X\|$,
where $A\in {\mathbb B}({\mathscr H})$ is selfadjoint and invertible and $X\in {\mathbb B}({\mathscr H})$ (see also \cite{ms2}), and\\
(iii) the inequality
$\|A^{2m + n}XB^{-n}+A^{-n}XB^{2m+n}\|\geq \|A^{2m}X+XB^{2m}\|$ in which $A,B$ are invertible self-adjoint operators, $X$ is an arbitrary operator in $\mathbb{B}(\mathscr{H})$ and both $m$ and $n$ are
nonnegative integers; see also Section 3.9 of the  monograph \cite{new}.

Audenaert \cite{aud} gave a singular value inequality for the Heinz means of matrices as follows:
If $A, B\in \mathbb{M}_n$ are positive semidefinite and $\nu\in[0,1]$, then
\begin{align*}
s_j(A^\nu B^{1-\nu}+A^{1-\nu}B^\nu)\leq s_j(A+B).
 \end{align*}
Kittaneh and Manasrah \cite{young2} showed a refinement of the
right hand side of inequality \eqref{purpos} for the Hilbert-Schmidt norm
as follows:
\begin{align}\label{r099}
\|A^\nu XB^{1-\nu}+A^{1-\nu}X B^\nu\|_2^2+2r_0\|AX-XB\|_2^2\leq\| AX+XB\|_2^2,
 \end{align}
in which $A, B, X\in \mathbb{M}_n$ such that $A, B$ are
positive semidefinite, $\nu\in[0,1]$ and $r_0=\min\{\nu, 1-\nu\}$.
Kaur et al. \cite{mos}, by using the convexity of the function
$f(\nu)=|||A^{1-\nu} XB^\nu+A^\nu XB^{1-\nu}|||\,\,(\nu\in[0,1])$
presented more refinements of the Heinz inequality. More precisely,
for $A, B, X\in \mathbb{M}_n$ such that $A, B$ are positive
semidefinite and $\nu\in[0,1]$, they showed the inequality
\begin{align*}
|||A^\nu XB^{1-\nu}+A^{1-\nu}X B^\nu|||\leq|||4r_1A^{1\over2}XB^{1\over2}+(1-2r_1)(AX+XB)|||,
 \end{align*}
where $r_1=\min\left\{\nu, |{1\over2}-\nu|, 1-\nu\right\}$.
 It is shown in \cite{manas2} a reverse of inequality \eqref{r099} as
\begin{align}\label{momo1}
\| AX+XB\|_2^2\leq\|A^\nu XB^{1-\nu}+A^{1-\nu}X B^\nu\|_2^2+2r_0\|AX-XB\|_2^2,
 \end{align}
where $A, B, X\in \mathbb{M}_n$ such that $A, B$ are
positive semidefinite, $\nu\in[0,1]$ and $r_0=\max\{\nu, 1-\nu\}$. Aujla \cite{Aujla1} showed that
\begin{align*}
 2|||A^{\frac{1}{2}}XB^{\frac{1}{2}}|||\leq|||A^sXB^{1-t}+A^{1-s}XB^t|||,
 \end{align*}
 where $A, B, X\in \mathbb{M}_n$ such that $A, B$ are positive semidefinite, $s,t\in[0,1]$. It is remarkable that, by using the fact that the function $g(s,t)=|||A^sXB^{1-t}+A^{1-s}XB^t|||$ attains its maximum at the vertices of the square $[0,1]\times [0,1]$, one can see that under the same conditions as above
 \begin{align*}
 |||A^sXB^{1-t}+A^{1-s}XB^t|||\leq \max\left\{|||AX+XB|||, |||AXB+X|||\right\},
 \end{align*}
Recently, Krni\'c et al. used the Jensen functional to improve several Heinz type inequalities \cite{KP}.

In this paper, we obtain a reverse of \eqref{r099} and some other operator inequalities. We also show some
results on the Hadamard product. In particular, we get the following Heinz type inequality
\begin{align*}
2|||A^{1\over2}\circ B^{1\over2}|||\leq|||A^{s}\circ B^{1-t}+A^{1-s}\circ B^{t}|||
\leq\max\{|||(A+B)\circ I|||,|||(A\circ B)+I|||\},
\end{align*}
where $A, B\in\mathcal{P}_n, X\in \mathbb{M}_n$ and $s,t\in[0,1]$.
\section{A reverse of the Heinz inequality for matrices}

In this section, we present a converse of the Heinz inequality and
give several refinements for matrices.
\begin{lemma}\label{lemma11}
Let $a, b>0$ and $\nu\not\in[0,1]$. Then
\begin{align}\label{004}
 a+b\leq a^{\nu}b^{1-\nu}+b^{\nu}a^{1-\nu}.
 \end{align}
\end{lemma}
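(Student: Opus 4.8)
The plan is to reduce the inequality to a single-variable statement about real numbers and then analyze a convexity/monotonicity property of the exponential. Since $a,b>0$, I would write $a=e^x$ and $b=e^y$ with $x,y\in\mathbb{R}$, so that the claimed inequality $a+b\leq a^\nu b^{1-\nu}+a^{1-\nu}b^\nu$ becomes
\begin{align*}
e^{x}+e^{y}\leq e^{\nu x+(1-\nu)y}+e^{(1-\nu)x+\nu y}.
\end{align*}
Without loss of generality assume $x\geq y$, and set $t=x-y\geq 0$. Dividing through by $e^{(x+y)/2}$ reduces everything to hyperbolic cosines: the inequality is equivalent to
\begin{align*}
\cosh\!\left(\tfrac{t}{2}\right)\leq \cosh\!\left(\bigl(\nu-\tfrac{1}{2}\bigr)t\right).
\end{align*}

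The key observation is then purely about the even, increasing-on-$[0,\infty)$ function $\cosh$. Since $\cosh$ is even and strictly increasing on $[0,\infty)$, the displayed inequality holds precisely when $\bigl|\nu-\tfrac12\bigr|\,t\geq \tfrac{t}{2}$, i.e. when $\bigl|\nu-\tfrac12\bigr|\geq \tfrac12$. This last condition is exactly the hypothesis $\nu\notin[0,1]$ (equivalently $\nu\leq 0$ or $\nu\geq 1$), which is where that assumption enters in an essential way. For $t=0$ (the case $a=b$) both sides coincide and the inequality is a trivial equality, so the argument covers the degenerate case as well.

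I would carry this out in three short steps: first, the substitution $a=e^x,\,b=e^y$ and the symmetric normalization by $e^{(x+y)/2}$ to land on the $\cosh$ form; second, the elementary comparison of the two arguments of $\cosh$ using $\nu\notin[0,1]\Leftrightarrow|\nu-\tfrac12|\geq\tfrac12$; and third, invoking monotonicity of $\cosh$ on $[0,\infty)$ together with its evenness to conclude. I expect the only genuine point requiring care to be the bookkeeping that translates $\nu\notin[0,1]$ into the sharp comparison $|\nu-\tfrac12|\geq\tfrac12$ and checking that the inequality is non-strict (it becomes equality at $\nu=0,1$ or $a=b$), so that the stated $\leq$ is correct rather than a strict inequality. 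An alternative route avoiding the exponential substitution is to fix $b$ and study $f(\nu)=a^\nu b^{1-\nu}+a^{1-\nu}b^\nu$ directly: $f$ is convex in $\nu$, symmetric about $\nu=\tfrac12$, and satisfies $f(0)=f(1)=a+b$, so convexity forces $f(\nu)\geq f(1)=a+b$ for all $\nu\notin[0,1]$; I would keep the $\cosh$ computation as the primary argument since it makes the role of the hypothesis most transparent.
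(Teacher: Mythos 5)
Your argument is correct, but it takes a genuinely different route from the paper. The paper first proves the reversed Young inequality $\nu a+(1-\nu)b\leq a^{\nu}b^{1-\nu}$ for $\nu\not\in[0,1]$ by minimizing the one-variable function $f(t)=t^{1-\nu}-\nu+(\nu-1)t$ at $t=1$ and substituting $t=b/a$, then adds this to its counterpart with $a$ and $b$ interchanged to obtain $a+b\leq a^{\nu}b^{1-\nu}+b^{\nu}a^{1-\nu}$. You instead set $a=e^{x}$, $b=e^{y}$, normalize by $e^{(x+y)/2}$, and reduce the claim to $\cosh(t/2)\leq\cosh\bigl((\nu-\tfrac12)t\bigr)$ with $t=x-y\geq0$, which follows from evenness and monotonicity of $\cosh$ once $\nu\not\in[0,1]$ gives $|\nu-\tfrac12|\geq\tfrac12$; your computation checks out, and the degenerate case $t=0$ is handled. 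Your approach makes the role of the hypothesis maximally transparent (the inequality holds exactly when $|\nu-\tfrac12|\geq\tfrac12$, with equality at $\nu=0,1$), and your alternative convexity-in-$\nu$ argument is essentially the referee's remark appended to the paper. What the paper's route buys, and yours does not, is the intermediate scalar inequality \eqref{e53}, which is reused several times later (in Lemma \ref{lemma14}, in the derivation of \eqref{gbz}, and via functional calculus in Section 3), so the authors' choice is motivated by economy across the whole paper rather than by this lemma alone. One trivial imprecision on your side: $\nu\not\in[0,1]$ is $\nu<0$ or $\nu>1$, which gives the strict inequality $|\nu-\tfrac12|>\tfrac12$ rather than being ``exactly'' the condition $|\nu-\tfrac12|\geq\tfrac12$; this only strengthens what you need, so nothing breaks.
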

\begin{proof}
Let $\nu\not\in[0,1]$.
Assume that  $f(t)=t^{1-\nu}-\nu+(\nu-1)t\,\,(t\in(0,\infty))$. It is easy to see that
$f(t)$ has a minimum at $t=1$ in the interval $(0,\infty)$. Hence
$f(t)\geq f(1)=0$ for all $t>0$. Assume that $a,b>0$.  Letting $t={b\over a}$, we get
\begin{align}\label{e53}
 \nu a+(1-\nu)b\leq a^{\nu}b^{1-\nu}.
 \end{align}
Applying \eqref{e53} we obtain
\begin{align*}
 \nu a+(1-\nu)b\leq a^{\nu}b^{1-\nu}\,\,\, \textrm{and}\,\,\,\nu b+(1-\nu)a\leq b^{\nu}a^{1-\nu},
 \end{align*}
whence
 \begin{align*}
 a+b\leq a^{\nu}b^{1-\nu}+b^{\nu}a^{1-\nu}.
 \end{align*}
\end{proof}

For $\nu\not\in[0,1]$, if we replace $\nu$ by $\nu/(2\nu-1)$ and $A, B$,$ X$ by  $A^{2\nu-1}, B^{2\nu-1}$,$ A^{1-\nu}XB^{1-\nu}$ in \eqref{purpos}, respectively, then we reach the following Theorem, complementary to the right
inequality in \eqref{purpos}.

\begin{theorem}\label{tm_hein_compl} Let $A, B\in \mathcal{P}_n$, $X\in \mathbb{M}_n$
and  $\nu\not\in[0,1]$. Then
\begin{equation*}
\left|\left|\left|AX+XB \right|\right|\right|\leq \left|\left|\left|A^{\nu}XB^{1-\nu}+A^{1-\nu}XB^{\nu} \right|\right|\right|.
\end{equation*}
\end{theorem}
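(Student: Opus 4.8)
The plan is to deduce the statement directly from the right-hand inequality in the Heinz double inequality \eqref{purpos}, which by the Bhatia--Davis theorem holds for every unitarily invariant norm, by performing the change of variables flagged just before the theorem. Explicitly, set $\mu:=\nu/(2\nu-1)$ and apply the right inequality of \eqref{purpos}, namely $|||A^{\mu}XB^{1-\mu}+A^{1-\mu}XB^{\mu}|||\le|||AX+XB|||$, with $\mu$ in place of $\nu$ and with $A^{2\nu-1}$, $B^{2\nu-1}$, $A^{1-\nu}XB^{1-\nu}$ in place of $A$, $B$, $X$, respectively. Since $A,B\in\mathcal P_n$ are positive definite, the real powers $A^{2\nu-1}$ and $B^{2\nu-1}$ are well defined and again positive definite, so the only hypothesis of \eqref{purpos} left to check is that $\mu\in[0,1]$.

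The first step is therefore to verify that $\mu\in[0,1]$. As $\nu\notin[0,1]$ we have $2\nu-1\ne0$, so $\mu$ is well defined; a short computation shows that if $\nu>1$ then $2\nu-1>0$ and $\mu\in(\tfrac12,1)$, whereas if $\nu<0$ then $2\nu-1<0$ and $\mu\in(0,\tfrac12)$. In both cases $\mu\in[0,1]$, so the Bhatia--Davis form of \eqref{purpos} is legitimately applicable to the substituted data.

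The second step is the exponent bookkeeping. Using $1-\mu=(\nu-1)/(2\nu-1)$ one finds $(A^{2\nu-1})^{\mu}=A^{\nu}$, $(A^{2\nu-1})^{1-\mu}=A^{\nu-1}$, and likewise for $B$. Feeding these into the right-hand side of the substituted inequality, $|||A^{2\nu-1}\,(A^{1-\nu}XB^{1-\nu})+(A^{1-\nu}XB^{1-\nu})\,B^{2\nu-1}|||$, the powers of $A$ and $B$ combine to give $|||A^{\nu}XB^{1-\nu}+A^{1-\nu}XB^{\nu}|||$; feeding them into the left-hand side similarly collapses it to $|||AX+XB|||$. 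Reading the inequality in the new variables then yields precisely $|||AX+XB|||\le|||A^{\nu}XB^{1-\nu}+A^{1-\nu}XB^{\nu}|||$, the asserted bound.

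The argument is essentially a change-of-variables exercise, so I do not expect a genuine obstacle. The one point that must be handled with care is confirming that the substituted exponent $\mu=\nu/(2\nu-1)$ lies in $[0,1]$ for every $\nu\notin[0,1]$, since this is exactly what permits the Bhatia--Davis extension of the classical Heinz inequality to be invoked; the positive definiteness of $A$ and $B$ is likewise essential, as it is what renders the non-integer real powers $A^{2\nu-1}$ and $B^{2\nu-1}$ meaningful.
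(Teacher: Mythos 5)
Your proposal is correct and is precisely the argument the paper itself gives: the authors prove Theorem \ref{tm_hein_compl} by the very substitution $\nu\mapsto\nu/(2\nu-1)$, $A\mapsto A^{2\nu-1}$, $B\mapsto B^{2\nu-1}$, $X\mapsto A^{1-\nu}XB^{1-\nu}$ in the Bhatia--Davis form of \eqref{purpos}, which they state in the sentence immediately preceding the theorem. Your write-up merely supplies the details the paper leaves implicit (the verification that $\nu/(2\nu-1)\in[0,1]$ for $\nu\notin[0,1]$ and the exponent bookkeeping), and both checks are carried out correctly.
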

In the next theorem we show a reverse of \eqref{r099}. First, we need the following lemma.
\begin{lemma}\label{lemma14}
Let $a, b>0$ and $\nu\not\in[\frac{1}{2},1]$. Then
\begin{itemize}\label{reverse}
\item[(i)]$\nu a+(1-\nu)b+(\nu-1)(\sqrt{a}-\sqrt{b})^2\leq a^{\nu}b^{1-\nu}$
 \item[(ii)]$(a+b)+2(\nu-1)(\sqrt{a}-\sqrt{b})^2\leq a^{\nu}b^{1-\nu}+b^{\nu}a^{1-\nu}$
\item[(iii)]$(a+b)^2+2(\nu-1)({a}-{b})^2\leq (a^{\nu}b^{1-\nu}+b^{\nu}a^{1-\nu})^2$.
\end{itemize}
 \end{lemma}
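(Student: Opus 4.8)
The plan is to establish (i) directly by a convexity argument and then obtain (ii) and (iii) from it by symmetrization and a change of variable. For (i), note that both sides are homogeneous of degree one in $(a,b)$, so I would normalize by dividing through by $a$ and setting $t=b/a>0$, reducing the claim to $\nu+(1-\nu)t+(\nu-1)(1-\sqrt t)^2\le t^{1-\nu}$. Expanding $(1-\sqrt t)^2=1-2\sqrt t+t$ and collecting terms, the coefficients of $t$ cancel, so the substitution $s=\sqrt t$ recasts the inequality as $s^{p}\ge 1+p(s-1)$ with $p=2(1-\nu)$. This is exactly the assertion that the graph of $s\mapsto s^{p}$ lies above its tangent line at $s=1$ (a Bernoulli-type inequality), in the same spirit as the minimum-at-$t=1$ argument used for Lemma \ref{lemma11}.

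The crucial point is matching the hypothesis on $\nu$ with the convexity of $s\mapsto s^{p}$. That power function is convex on $(0,\infty)$ precisely when $p\le 0$ or $p\ge 1$; since $p=2(1-\nu)$, these two regimes translate into $\nu\ge 1$ and $\nu\le\frac12$ respectively, which is exactly the condition $\nu\notin[\frac12,1]$ (with the endpoints producing equality). A convex function lying above its tangent then gives $s^{p}\ge 1+p(s-1)$, hence (i). I expect this sign bookkeeping---verifying that the excluded interval for $\nu$ coincides exactly with the non-convexity range of the power $p=2(1-\nu)$---to be the only genuinely delicate step; the remainder is elementary algebra.

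For (ii), I would symmetrize (i). Interchanging $a$ and $b$ in (i), and using $(\sqrt b-\sqrt a)^2=(\sqrt a-\sqrt b)^2$, yields $\nu b+(1-\nu)a+(\nu-1)(\sqrt a-\sqrt b)^2\le b^{\nu}a^{1-\nu}$. Adding this to (i) collapses the linear part to $a+b$ and doubles the square term, producing exactly (ii).

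For (iii), I would apply (ii) with $a,b$ replaced by $a^2,b^2$, so that $\sqrt{a^2}=a$ and $\sqrt{b^2}=b$, giving $a^2+b^2+2(\nu-1)(a-b)^2\le a^{2\nu}b^{2(1-\nu)}+b^{2\nu}a^{2(1-\nu)}$. Squaring the Heinz expression gives $(a^{\nu}b^{1-\nu}+b^{\nu}a^{1-\nu})^2=a^{2\nu}b^{2(1-\nu)}+b^{2\nu}a^{2(1-\nu)}+2ab$, since the cross term simplifies to $ab$, while $(a+b)^2=a^2+b^2+2ab$. Adding $2ab$ to both sides of the transformed (ii) then reproduces (iii) verbatim, so the only remaining checks are these two elementary cross-term identities.
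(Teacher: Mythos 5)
Your proposal is correct and follows essentially the same route as the paper: the paper's proof of (i) rewrites the left-hand side as $(2-2\nu)\sqrt{ab}+(2\nu-1)a$ and applies its reversed weighted arithmetic--geometric mean inequality \eqref{e53} to the pair $(\sqrt{ab},a)$ with weight $2-2\nu\notin[0,1]$, which after dividing by $a$ is precisely your tangent-line inequality $s^{p}\geq 1+p(s-1)$ with $s=\sqrt{b/a}$ and $p=2(1-\nu)$. Parts (ii) and (iii) are handled identically (symmetrization, then the substitution $a\mapsto a^{2}$, $b\mapsto b^{2}$ together with the cross-term identity you verify).
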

\begin{proof}
Let $a, b>0$ and $\nu\not\in[\frac{1}{2},1]$.\\
$(\textrm{i})$ By inequality \eqref{e53},
\begin{align*}
\nu a+(1-\nu)b+(\nu-1)(\sqrt{a}-\sqrt{b})^2&=(2-2\nu)\sqrt{ab}+(2\nu-1)a\\&\leq(\sqrt{ab})^{2-2\nu}a^{2\nu-1}= a^{\nu}b^{1-\nu}.
 \end{align*}
 $(\textrm{ii})$ It can be proved in a similar fashion as $(\textrm{i})$.\\
 $(\textrm{iii})$ It follows from $(\textrm{ii})$ by replacing  $a$ by $a^2$ and $b$ by $b^2$.
 \end{proof}

\begin{theorem}\label{235}
Suppose that $A, B\in\mathcal{P}_n, X\in\mathbb{M}_n$  and $\nu>1$. Then
\begin{align*}
\|AX+XB\|_2^2+ 2(\nu-1) \|AX-XB\|_2^2\leq \|A^{\nu}XB^{1-\nu}+A^{1-\nu}XB^{\nu}\|_2^2\,.
 \end{align*}
\end{theorem}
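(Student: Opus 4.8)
The plan is to reduce the operator inequality to a family of scalar inequalities via the spectral decomposition, and then invoke Lemma~\ref{lemma14}(iii) entrywise. Since $A,B\in\mathcal{P}_n$, first I would diagonalize them: write $A=U\Lambda U^*$ and $B=VMV^*$, where $U,V$ are unitary, $\Lambda=\mathrm{diag}(\lambda_1,\dots,\lambda_n)$ and $M=\mathrm{diag}(\mu_1,\dots,\mu_n)$, with all $\lambda_i,\mu_j>0$. Setting $Y=U^*XV=[y_{ij}]$ and using that the Hilbert--Schmidt norm is unitarily invariant, each of the three terms in the statement becomes a norm of a Hadamard-scaled matrix whose $(i,j)$ entry is an explicit scalar function of $\lambda_i,\mu_j$ times $y_{ij}$.

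Concretely, the key computation is that $(AX+XB)$ transforms to the matrix with entries $(\lambda_i+\mu_j)y_{ij}$, that $(AX-XB)$ transforms to $(\lambda_i-\mu_j)y_{ij}$, and that $A^{\nu}XB^{1-\nu}+A^{1-\nu}XB^{\nu}$ transforms to $\bigl(\lambda_i^{\nu}\mu_j^{1-\nu}+\lambda_i^{1-\nu}\mu_j^{\nu}\bigr)y_{ij}$. Since $\|\cdot\|_2^2$ of a matrix is the sum of the squared moduli of its entries, the whole inequality collapses to
\begin{align*}
\sum_{i,j}\Bigl[(\lambda_i+\mu_j)^2+2(\nu-1)(\lambda_i-\mu_j)^2\Bigr]\,|y_{ij}|^2
\leq\sum_{i,j}\bigl(\lambda_i^{\nu}\mu_j^{1-\nu}+\lambda_i^{1-\nu}\mu_j^{\nu}\bigr)^2\,|y_{ij}|^2.
\end{align*}
Because the $|y_{ij}|^2$ are nonnegative, it suffices to establish the inequality coefficient by coefficient.

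At this point the result follows by applying Lemma~\ref{lemma14}(iii) with $a=\lambda_i$ and $b=\mu_j$ for each fixed pair $(i,j)$. The hypothesis $\nu>1$ guarantees $\nu\notin[\tfrac12,1]$, which is exactly the regime in which that lemma is valid, so no case analysis is needed. I do not anticipate a genuine obstacle here; the only point demanding care is bookkeeping in the spectral reduction, namely verifying that the functional calculus acts diagonally so that the three transformed matrices really do have the claimed entries. Once that verification is in place, summing the pointwise inequalities against the weights $|y_{ij}|^2$ and reversing the unitary change of variables yields the stated norm inequality.
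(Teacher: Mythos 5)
Your proposal is correct and follows essentially the same route as the paper: diagonalize $A$ and $B$ via the spectral decomposition, observe that each of the three expressions becomes an entrywise-scaled version of $U^*XV$, and apply Lemma~\ref{lemma14}(iii) coefficient by coefficient with $a=\lambda_i$, $b=\mu_j$ before summing against the weights $|y_{ij}|^2$. No substantive difference from the paper's argument.
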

\begin{proof}
By the spectral decomposition \cite[Theorem 3.4]{Zhang1}, there are unitary matrices $U, V\in\mathbb{M}_n$ such that $A=U\Lambda U^*$ and $B=V\Gamma V^*$, where $\Lambda={\rm diag}(\lambda_1, \lambda_2, \cdots, \lambda_n)$, $\Gamma={\rm diag}(\gamma_1, \gamma_2, \cdots, \gamma_n)$, and $\lambda_j, \gamma_j\,\,(j=1, \cdots, n)$ are eigenvalues of $A, B$, respectively. These numbers are positive. If $Z=U^*XV=\big{[}z_{ij}\big{]}$, then
\begin{align}\label{225}
AX+XB=U\Big{(}\Lambda Z+Z\Gamma\Big{)}V^*=U\Big{[}\Big{(}\lambda_i+\gamma_j\Big{)}z_{ij}\Big{]}V^*,
 \end{align}
 \begin{align}\label{2260}
AX-XB=U\Lambda U^*X-XV\Gamma V^*=
U\Big{[}\Lambda Z-Z\Gamma\Big{]}V^*=
U\Big{[}\Big{(}\lambda_i -\gamma_j\Big{)}z_{ij}\Big{]}V^*
 \end{align}
 and
\begin{align}\label{227}
A^{\nu}XB^{1-\nu}+A^{1-\nu}XB^{\nu}&=U\Lambda^{\nu}U^*XV\Gamma^{1-\nu}V^*
+U\Lambda^{1-\nu}U^*XV\Gamma^{\nu}V^*\nonumber\\&=
U\Lambda^{\nu}Z\Gamma^{1-\nu}V^*
+U\Lambda^{1-\nu}Z\Gamma^{\nu}V^*\nonumber\\&=
U\Big{[}\Lambda^{\nu}Z\Gamma^{1-\nu}
+\Lambda^{1-\nu}Z\Gamma^{\nu}\Big{]}V^*\nonumber\\&=
U\Big{[}\Big{(}\lambda_i^{\nu}\gamma_j^{1-\nu}+\lambda_i^{1-\nu}\gamma_j^{\nu}\Big{)}z_{ij}\Big{]}V^*.
\end{align}
It follows from \eqref{225}, \eqref{2260} and \eqref{227} that
\begin{align*}
\|AX+XB\|_2^2&+2(\nu-1)\|AX-XB\|_2^2\\&=\sum_{i,j=1}^n
\Big{(}\lambda_i+\gamma_j\Big{)}^2|z_{ij}|^2+2(\nu-1)\sum_{i,j=1}^n\Big{(}\lambda_i
-\mu_j\Big{)}^2|z_{ij}|^2
\\&\leq\sum_{i,j=1}^n\Big{(}\lambda_i^{\nu}\gamma_j^{1-\nu}+\lambda_i^{1-\nu}\gamma_j^{\nu}\Big{)}^2|z_{ij}|^2\,\,\,
\textrm{(by Lemma }\,\ref{lemma14}\,(\textrm{iii}))
\\&=\|A^{\nu}XB^{1-\nu}+A^{1-\nu}XB^{\nu}\|_2^2\,.
 \end{align*}
\end{proof}
\begin{remark}
Utilizing Lemma \ref{reverse}, one can easily see that Theorem \ref{235} holds for $\nu<\frac{1}{2}$. The case $\nu<\frac{1}{2}$ is not interesting since the left hand side is less precise than the left hand side of Theorem \ref{tm_hein_compl}, but the case of $0\leq \nu\leq \frac{1}{2}$ coincides with  inequality \eqref{momo1}.
\end{remark}
 Theorem \ref{235} yields the next two corollaries.
\begin{corollary}\label{coro21}
Suppose that $A, B\in\mathcal{P}_n, X\in\mathbb{M}_n$  and $\nu>1$. Then
\begin{align*}
\|AX+XB\|_2=\|A^{\nu}XB^{1-\nu}+A^{1-\nu}XB^{\nu}\|_2
 \end{align*}
 if and only if $AX=XB$.
\end{corollary}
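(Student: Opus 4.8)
The plan is to reuse the spectral decomposition and the termwise estimate from the proof of Theorem \ref{235}, and then to pin down exactly when the scalar inequality of Lemma \ref{lemma14}(iii) is saturated. Keeping the notation $A=U\Lambda U^*$, $B=V\Gamma V^*$ and $Z=U^*XV=[z_{ij}]$ from that proof, the identities \eqref{225}, \eqref{2260} and \eqref{227} give
\begin{align*}
\|A^{\nu}XB^{1-\nu}+A^{1-\nu}XB^{\nu}\|_2^2-\|AX+XB\|_2^2=\sum_{i,j=1}^n\left[\left(\lambda_i^{\nu}\gamma_j^{1-\nu}+\lambda_i^{1-\nu}\gamma_j^{\nu}\right)^2-\left(\lambda_i+\gamma_j\right)^2\right]|z_{ij}|^2.
\end{align*}
By Lemma \ref{lemma14}(iii) each bracket is at least $2(\nu-1)(\lambda_i-\gamma_j)^2\geq0$ (here $\nu>1$), so the right-hand side is a sum of nonnegative terms. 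Since both norms are nonnegative, the asserted equality is equivalent to the vanishing of every summand, that is, to $\big(\lambda_i^{\nu}\gamma_j^{1-\nu}+\lambda_i^{1-\nu}\gamma_j^{\nu}\big)^2=(\lambda_i+\gamma_j)^2$ for each pair $(i,j)$ with $z_{ij}\neq0$.

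The core of the argument is therefore the equality case of Lemma \ref{lemma14}(iii): for $a,b>0$ and $\nu>1$ one has $\big(a^{\nu}b^{1-\nu}+b^{\nu}a^{1-\nu}\big)^2=(a+b)^2$ if and only if $a=b$. I would obtain this by tracking strictness down the chain (iii)$\Leftarrow$(ii)$\Leftarrow$(i)$\Leftarrow$\eqref{e53}. The key point is that \eqref{e53} is strict unless its two arguments coincide, which follows from Lemma \ref{lemma11}: the function $f(t)=t^{1-\nu}-\nu+(\nu-1)t$ has $f'(t)=(1-\nu)(t^{-\nu}-1)$, hence a \emph{unique} minimizer at $t=1$ with $f(1)=0$, so $f(t)>0$ for $t\neq1$. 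Used as in the proof of (i)—with weight $2\nu-1$ and arguments $a$ and $\sqrt{ab}$—this forces $a=\sqrt{ab}$, i.e. $a=b$, and the symmetric application propagates the condition through (ii) and (iii); conversely, $a=b$ makes both sides equal to $(2a)^2$. Consequently the equality condition above reads: $\lambda_i=\gamma_j$ whenever $z_{ij}\neq0$.

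It then remains to recognize this spectral condition as $AX=XB$. By \eqref{2260} we have $AX-XB=U\big[(\lambda_i-\gamma_j)z_{ij}\big]V^*$, and since $U,V$ are unitary, $AX=XB$ holds precisely when $(\lambda_i-\gamma_j)z_{ij}=0$ for all $i,j$, i.e. exactly when $\lambda_i=\gamma_j$ for every $(i,j)$ with $z_{ij}\neq0$. This is the same condition, so the two statements are equivalent and the corollary follows.

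I expect the main obstacle to be the rigorous verification of the equality case of Lemma \ref{lemma14}(iii); once the strictness of \eqref{e53} (equivalently, the uniqueness of the minimizer of $f$) is in hand, the remaining steps—identifying which summands can vanish and matching that with $AX=XB$ via \eqref{2260}—are routine bookkeeping.
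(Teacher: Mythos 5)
Your proof is correct and rests on the same mechanism as the paper's, namely the remainder term $2(\nu-1)\|AX-XB\|_2^2$ in Theorem \ref{235}; but you work considerably harder than necessary. For the converse direction the paper simply substitutes the assumed equality of norms into the displayed inequality of Theorem \ref{235}, which yields $2(\nu-1)\|AX-XB\|_2^2\leq 0$ and hence $AX=XB$ since $\nu>1$; there is no need to reopen the spectral decomposition, and no need for the equality-case analysis of Lemma \ref{lemma14}(iii). (That equality case, which you correctly establish by tracking strictness back to \eqref{e53}, also follows in one line from the lemma as stated: if $(a+b)^2=(a^{\nu}b^{1-\nu}+b^{\nu}a^{1-\nu})^2$ then part (iii) gives $2(\nu-1)(a-b)^2\leq 0$, so $a=b$ -- the same shortcut you could have applied to each bracket in your sum.) For the forward direction the paper argues at the operator level, from $AX=XB\Rightarrow A^{\nu}X=XB^{\nu}$ and $A^{1-\nu}X=XB^{1-\nu}$, which is exactly the coordinate-free version of your observation that $\lambda_i=\gamma_j$ whenever $z_{ij}\neq 0$. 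So nothing is wrong, but the entrywise detour buys you nothing that the statement of Theorem \ref{235} does not already provide.
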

\begin{proof}
If $AX=XB$, then $A^{\nu}X=XB^{\nu}$ and $A^{1-\nu}X=XB^{1-\nu}$. Hence
\begin{align*}
\|A^{\nu}XB^{1-\nu}+A^{1-\nu}XB^{\nu}\|_2
=\|A^{\nu}A^{1-\nu}X+XB^{1-\nu}B^{\nu}\|_2=\|AX+XB\|_2.
 \end{align*}
 Conversely, assume that $\|AX+XB\|_2=\|A^{\nu}XB^{1-\nu}+A^{1-\nu}XB^{\nu}\|_2$. It follows from Theorem \ref{235} that $\|AX-XB\|_2=0$. Thus $AX=XB$.
\end{proof}
\begin{corollary}
Let $A, B\in\mathcal{P}_n$  and $\nu>1$. Then
\begin{align*}
s_j(A+B)=s_j(A^{\nu}B^{1-\nu}+A^{1-\nu}B^{\nu})\qquad(j=1,2,\cdots, n)
 \end{align*}
 if and only if $A=B$.
\end{corollary}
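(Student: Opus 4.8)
The plan is to specialize $X=I$ in Theorem \ref{235} and then exploit the relation between the singular values and the Hilbert-Schmidt norm. Setting $X=I$, the three matrices occurring in Theorem \ref{235} become $A+B$, $A-B$ and $A^{\nu}B^{1-\nu}+A^{1-\nu}B^{\nu}$, so the theorem specializes to
\begin{align*}
\|A+B\|_2^2+2(\nu-1)\|A-B\|_2^2\leq\|A^{\nu}B^{1-\nu}+A^{1-\nu}B^{\nu}\|_2^2.
\end{align*}

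For the nontrivial (forward) implication, I would observe that the hypothesis $s_j(A+B)=s_j(A^{\nu}B^{1-\nu}+A^{1-\nu}B^{\nu})$ for every $j$ forces the two Hilbert-Schmidt norms to coincide, since $\|M\|_2^2=\sum_{j=1}^n s_j^2(M)$. Substituting $\|A+B\|_2^2=\|A^{\nu}B^{1-\nu}+A^{1-\nu}B^{\nu}\|_2^2$ into the displayed inequality leaves $2(\nu-1)\|A-B\|_2^2\leq 0$. As $\nu>1$, the factor $2(\nu-1)$ is strictly positive, so $\|A-B\|_2=0$, that is $A=B$. Equivalently, one may simply invoke Corollary \ref{coro21} with $X=I$, which already records exactly the Hilbert-Schmidt equality case $AX=XB$.

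The converse is a direct verification: if $A=B$ then $A^{\nu}B^{1-\nu}=A^{\nu}A^{1-\nu}=A$ and $A^{1-\nu}B^{\nu}=A^{1-\nu}A^{\nu}=A$, whence $A^{\nu}B^{1-\nu}+A^{1-\nu}B^{\nu}=2A=A+B$ as matrices; equal matrices trivially share all their singular values.

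The only point requiring any care is the passage from the singular-value identities to the scalar Hilbert-Schmidt identity: once all the $s_j$ agree, equality of the norms is immediate from $\|M\|_2^2=\sum_j s_j^2(M)$, and the strict positivity of $2(\nu-1)$ for $\nu>1$ does the rest. I therefore expect no genuine obstacle here; the corollary is a clean consequence of the sharp inequality in Theorem \ref{235}, and its proof is essentially the $X=I$ reading of Corollary \ref{coro21}.
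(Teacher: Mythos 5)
Your proof is correct and follows essentially the same route as the paper: convert the singular-value equalities into equality of Hilbert--Schmidt norms via $\|M\|_2^2=\sum_j s_j^2(M)$, then apply Corollary \ref{coro21} (equivalently, Theorem \ref{235}) with $X=I$ to conclude $A=B$, the converse being a direct computation. No issues.
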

\begin{proof}
If $A=B$, then $A+B=A^{\nu}B^{1-\nu}+A^{1-\nu}B^{\nu}$. Conversely, assume
that $s_j(A+B)=s_j(A^{\nu}B^{1-\nu}+A^{1-\nu}B^{\nu})\,\,(j=1,2,\cdots,
n)$. Then $\|AX+XB\|_2=\|A^{\nu}XB^{1-\nu}+A^{1-\nu}XB^{\nu}\|_2$. It
follows from Corollary \ref{coro21} that $A=B$.
\end{proof}

\section{A reverse of the Heinz inequality for operators }

In this section we obtain a reverse of the Heinz inequality for two
positive invertible operators as well as some other operator
inequalities.\\ In \cite{debrecenkit}, the authors investigated an operator version of
the classical Heinz mean, i.e., the operator
\begin{equation}
\label{Heinz_operator} H_\nu(A,B)=\frac{A\ \! \sharp_\nu \ \! B+
A\ \! \sharp_{1-\nu} \ \! B}{2},
\end{equation}where $A, B\in\mathbb{B}(\mathscr{H})_{++}$, and $\nu\in [0,1]$.
As in the real case, this mean interpolates between
arithmetic and geometric mean, that is,
\begin{equation*}
\label{heinz_interpolate} A\ \!\sharp \ \! B\leq H_\nu(A,B)\leq A\
\!\nabla \ \! B.
\end{equation*}
On the other hand, since $A, B\in\mathbb{B}(\mathscr{H})_{++}$,
the expression (\ref{Heinz_operator}) is also well-defined for
$\nu\not\in [0,1]$. Using inequality \eqref{e53} and the functional calculus for $A^{-1\over2}BA^{-1\over2}$ we get the following result.

 \begin{equation}\label{gbz}
H_{1-\nu}(A,B)=\frac{A\sharp_{1-\nu}B+A\sharp_{\nu}B}{2}\geq\frac{A\nabla_{1-\nu}B+A\nabla_{\nu}B}{2}=A\nabla B,
\end{equation}
where $A, B\in{\mathbb B}({\mathscr H})_{++}$ and $\nu\not\in [0,1]$.
Applying Lemma \ref{lemma14} $(\textrm{ii})$, we have a refinement
of inequality \eqref{gbz}.
\begin{theorem}\label{nege}
Let $A, B\in{\mathbb B}({\mathscr H})_{++}$ and $\nu>1$. Then
 \begin{align*}
A\nabla B+2(\nu-1)(A\nabla B-A\sharp_{1/2}B)\leq H_{1-\nu}(A,B)\,.
 \end{align*}
\end{theorem}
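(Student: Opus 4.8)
We must prove that for $A,B\in{\mathbb B}({\mathscr H})_{++}$ and $\nu>1$,
\begin{align*}
A\nabla B+2(\nu-1)(A\nabla B-A\sharp_{1/2}B)\leq H_{1-\nu}(A,B).
\end{align*}

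The plan is to reduce the operator inequality to the scalar inequality in Lemma \ref{lemma14}(ii) by means of the functional calculus, exactly as the paragraph preceding the statement suggests. First I would recall that since $\nu>1$ implies $\nu\notin[\tfrac12,1]$, Lemma \ref{lemma14}(ii) applies: for all $a,b>0$,
\begin{align*}
(a+b)+2(\nu-1)(\sqrt{a}-\sqrt{b})^2\leq a^{\nu}b^{1-\nu}+b^{\nu}a^{1-\nu}.
\end{align*}
Dividing by $2$ and rewriting $(\sqrt a-\sqrt b)^2=(a+b)-2\sqrt{ab}$, the left-hand side becomes $\tfrac{a+b}{2}+2(\nu-1)\big(\tfrac{a+b}{2}-\sqrt{ab}\big)$, which is the scalar template for $A\nabla B+2(\nu-1)(A\nabla B-A\sharp_{1/2}B)$, while the right-hand side is $H_{1-\nu}(a,b)$. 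So the entire operator statement is the functional-calculus lift of one scalar inequality.

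The key step is to normalize by $A$ so that all the operator means become functions of a single positive operator. Set $T=A^{-1/2}BA^{-1/2}\in{\mathbb B}({\mathscr H})_{++}$. By definition $A\sharp_\mu B=A^{1/2}T^{\mu}A^{1/2}$, so each term in $H_{1-\nu}(A,B)$, in $A\nabla B=A^{1/2}\big(\tfrac{I+T}{2}\big)A^{1/2}$, and in $A\sharp_{1/2}B=A^{1/2}T^{1/2}A^{1/2}$ is of the form $A^{1/2}f(T)A^{1/2}$ for an explicit continuous function $f$ on $\sigma(T)\subset(0,\infty)$. Applying the scalar inequality with $a=1$, $b=t$ for each $t\in\sigma(T)$ gives
\begin{align*}
\frac{1+t}{2}+2(\nu-1)\Big(\frac{1+t}{2}-\sqrt{t}\Big)\leq \frac{t^{1-\nu}+t^{\nu}}{2}\qquad(t>0),
\end{align*}
and then the Gelfand map (the order-preserving property of functional calculus stated in the introduction) yields the corresponding operator inequality for $f(T)$. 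Finally I would conjugate both sides by $A^{1/2}$: since $C\leq D$ implies $A^{1/2}CA^{1/2}\leq A^{1/2}DA^{1/2}$, this restores the factors $A^{1/2}(\cdot)A^{1/2}$ and reproduces exactly the claimed inequality, because $A^{1/2}T^{1-\nu}A^{1/2}=A\sharp_{1-\nu}B$ and $A^{1/2}T^{\nu}A^{1/2}=A\sharp_{\nu}B$, whose average is $H_{1-\nu}(A,B)$.

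The step requiring the most care is the bookkeeping that matches the scalar function $f(t)=\tfrac{t^{1-\nu}+t^{\nu}}{2}-\tfrac{1+t}{2}-2(\nu-1)\big(\tfrac{1+t}{2}-\sqrt t\big)\geq0$ to its operator counterpart after conjugation; in particular one must verify that $A\nabla B-A\sharp_{1/2}B$ really equals $A^{1/2}\big(\tfrac{I+T}{2}-T^{1/2}\big)A^{1/2}$, which is immediate from the definitions but is the place where the factor $A^{1/2}(\cdot)A^{1/2}$ must be tracked consistently across all three terms. No genuine obstacle arises: once the reduction to $T$ is in place, the inequality is purely a consequence of Lemma \ref{lemma14}(ii) together with the monotonicity of the functional calculus and conjugation, so the whole argument is short.
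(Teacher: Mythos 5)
Your proof is correct and follows essentially the same route as the paper: it applies Lemma \ref{lemma14}(ii) in the scalar form $\frac{1+t}{2}+(\nu-1)(t-2\sqrt{t}+1)\leq\frac{t^{1-\nu}+t^{\nu}}{2}$ to $T=A^{-1/2}BA^{-1/2}$ via the functional calculus and then conjugates by $A^{1/2}$. The bookkeeping identifying $A\nabla B-A\sharp_{1/2}B$ with $A^{1/2}\bigl(\tfrac{I+T}{2}-T^{1/2}\bigr)A^{1/2}$ is exactly what the paper's displayed computation carries out.
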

\begin{proof}
By Lemma \ref{lemma14} $(\textrm{ii})$, we have
$\frac{1+t}{2}+(\nu-1)(t-2\sqrt{t}+1)\leq \frac{t^{1-\nu}+t^{\nu}}{2}\,\,(t>0)$. Hence
\begin{align}\label{232}
\frac{(1+A^{-{1\over2}}BA^{-{1\over2}})}{2}+(\nu-1)(A^{-{1\over2}}BA^{-{1\over2}}-2(A^{-{1\over2}}BA^{-{1\over2}})^{1\over2}+1)\nonumber\\
\leq \frac{ (A^{-{1\over2}}BA^{-{1\over2}})^{1-\nu}+(A^{-{1\over2}}BA^{-{1\over2}})^{\nu}}{2}.
 \end{align}
 Multiplying $A^{1\over2}$ by the both sides of \eqref{232} we get
 \begin{align*}
A\nabla B+2(\nu-1)(A\nabla B-A\sharp_{1/2}B)\leq \frac {A\sharp_{1-\nu}B+A\sharp_{\nu}B}{2}= H_{1-\nu}(A,B)\,.
 \end{align*}
\end{proof}
\begin{remark}
Theorem \ref{nege} also holds for $\nu<\frac{1}{2}$. The case when $\nu<\frac{1}{2}$ is not interesting, since it is less precise than inequality \eqref{gbz}, but the case of $0\leq \nu\leq\frac{1}{2}$ coincides with the inequality at \cite[Corollary 2]{debrecenkit}.
\end{remark}
Applying Theorem \ref{nege} we get immediately the following result.
\begin{corollary}
Let $A, B\in{\mathbb B}({\mathscr H})_{++}$ and $\nu>1$. Then
\begin{align*}
H_{1-\nu}(A,B)=A\nabla B
 \end{align*}
 if and only if $A=B$.
\end{corollary}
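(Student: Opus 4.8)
The plan is to mirror the template of Corollary \ref{coro21}, reading off the equality characterization directly from the strict refinement provided by Theorem \ref{nege}. The forward implication should be immediate: assuming $A=B$, I would observe that $A\sharp_\mu A = A^{1/2}(A^{-1/2}AA^{-1/2})^\mu A^{1/2} = A^{1/2}I^\mu A^{1/2} = A$ for every real $\mu$, so that both $H_{1-\nu}(A,A)$ and $A\nabla A$ collapse to $A$, giving equality.

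For the converse, I would suppose $H_{1-\nu}(A,B) = A\nabla B$ and substitute this into the conclusion of Theorem \ref{nege}:
\begin{align*}
A\nabla B + 2(\nu-1)\left(A\nabla B - A\sharp_{1/2}B\right) \leq H_{1-\nu}(A,B) = A\nabla B.
\end{align*}
This forces $2(\nu-1)\left(A\nabla B - A\sharp_{1/2}B\right) \leq 0$, and since $\nu>1$ the scalar factor $2(\nu-1)$ is strictly positive, whence $A\nabla B \leq A\sharp_{1/2}B$. Coupling this with the standard arithmetic--geometric mean inequality $A\sharp_{1/2}B \leq A\nabla B$, I would conclude that $A\nabla B = A\sharp_{1/2}B$.

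It then remains to settle the equality case of the AM--GM inequality, namely that $A\nabla B = A\sharp_{1/2}B$ holds if and only if $A=B$; this is the only step demanding genuine argument. I would reduce to a spectral identity on the positive operator $T := A^{-{1\over2}}BA^{-{1\over2}}$ by conjugating both sides with $A^{-{1\over2}}$, which rewrites the equality as $\frac{I+T}{2} = T^{1/2}$, equivalently $(T^{1/2}-I)^2 = T - 2T^{1/2} + I = 0$. Since $T^{1/2}-I$ is self-adjoint, a vanishing square forces $T^{1/2} = I$, hence $T = I$ and therefore $B = A$. The main (and essentially the only) obstacle is discharging this equality condition in the operator setting, where diagonalization is unavailable in general; the resolution is to exploit self-adjointness to pass from $(T^{1/2}-I)^2 = 0$ to $T^{1/2} = I$, using the fact that $\||S|^2\| = \|S\|^2$ for the self-adjoint $S = T^{1/2}-I$.
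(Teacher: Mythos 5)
Your proposal is correct and follows exactly the route the paper intends: the paper offers no written proof beyond the remark that the corollary follows ``immediately'' from Theorem \ref{nege}, and your argument is precisely that deduction carried out in detail. Your extra care with the equality case of the operator arithmetic--geometric mean inequality --- passing from $(T^{1/2}-I)^2=0$ to $T^{1/2}=I$ via self-adjointness, since diagonalization is unavailable in ${\mathbb B}({\mathscr H})$ --- is exactly the step the paper leaves implicit, and you discharge it correctly.
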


Applying Lemma \ref{lemma11} we get
\begin{align*}
a+a^{-1}\leq a^{\nu}+ a^{-{\nu}}\qquad(a>0,\,\nu>1).
\end{align*}
Utilizing this inequality, the functional calculus for $A\otimes B^{-1}$ and the
definition of the Hadamard product we get the following result.
\begin{proposition}
Let $A, B\in{\mathbb B}({\mathscr H})_{++}$ and $\nu>1$. Then
\begin{itemize}
\item[(i)]$A\otimes B^{-1}+A^{-1}\otimes B\leq A^{\nu}\otimes B^{-{\nu}}+A^{-{\nu}}\otimes B^{\nu}$
 \item[(ii)]$A\circ B^{-1}+A^{-1}\circ B\leq A^{\nu}\circ B^{-{\nu}}+A^{-{\nu}}\circ B^{\nu}$.
\end{itemize}
\end{proposition}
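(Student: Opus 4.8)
The plan is to reduce part (i) to a one-variable functional-calculus statement about the single positive invertible operator $T:=A\otimes B^{-1}$, and then to deduce part (ii) from part (i) by compressing with the isometry $V$ that defines the Hadamard product. This mirrors exactly the sentence preceding the statement, which isolates the scalar inequality as the engine of the argument.

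First I would record the scalar inequality $a+a^{-1}\le a^{\nu}+a^{-\nu}$ for $a>0$ and $\nu>1$; this is the displayed inequality immediately before the proposition, and it follows from Lemma \ref{lemma11} by taking the two variables to be $a$ and $a^{-1}$ and then renaming the exponent (legitimate because $\nu\mapsto 2\nu-1$ maps $(1,\infty)$ onto $(1,\infty)$). Setting $f(t)=t^{\nu}+t^{-\nu}$ and $g(t)=t+t^{-1}$, this says $g(t)\le f(t)$ for every $t\in(0,\infty)$. Since $A,B\in{\mathbb B}({\mathscr H})_{++}$, both $A$ and $B^{-1}$ are positive and invertible, so $T=A\otimes B^{-1}$ is positive invertible with $\sigma(T)\subset(0,\infty)$ bounded away from $0$. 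Writing $T=(A\otimes I)(I\otimes B^{-1})$ as a product of two commuting positive invertible operators, the functional calculus gives $T^{-1}=A^{-1}\otimes B$, $T^{\nu}=A^{\nu}\otimes B^{-\nu}$ and $T^{-\nu}=A^{-\nu}\otimes B^{\nu}$. Hence the asserted inequality (i) is precisely $g(T)\le f(T)$, and because $g\le f$ holds on $\sigma(T)$, the order-preserving property of the Gelfand map recalled in the introduction yields $g(T)\le f(T)$, which is (i).

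For (ii) I would invoke the representation $C\circ D=V^{*}(C\otimes D)V$ of the Hadamard product, so that $A\circ B^{-1}=V^{*}(A\otimes B^{-1})V$, $A^{-1}\circ B=V^{*}(A^{-1}\otimes B)V$, and likewise for the $\nu$-th power terms. Compressing the operator inequality (i) by the map $S\mapsto V^{*}SV$, which preserves the operator order, and collecting the four terms, transforms (i) termwise into (ii).

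The only genuinely non-routine points are the tensor-power identity $(A\otimes B^{-1})^{\nu}=A^{\nu}\otimes B^{-\nu}$, which I would justify through the commuting-factor decomposition $T=(A\otimes I)(I\otimes B^{-1})$ rather than by manipulating the functional calculus directly, and the verification that $\sigma(T)$ remains inside $(0,\infty)$ so that the real powers $T^{\pm\nu}$ are well defined. I expect the former to be the main obstacle to write carefully; once it is in place, both parts are direct substitutions.
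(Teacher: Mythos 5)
Your proposal is correct and follows essentially the same route the paper indicates: the scalar inequality $a+a^{-1}\le a^{\nu}+a^{-\nu}$ obtained from Lemma \ref{lemma11}, the functional calculus applied to $A\otimes B^{-1}$ for part (i), and the compression $S\mapsto V^{*}SV$ defining the Hadamard product for part (ii). The paper only sketches these steps, and your write-up supplies the same argument with the details (the reparametrization $\nu\mapsto 2\nu-1$ and the identity $(A\otimes B^{-1})^{\nu}=A^{\nu}\otimes B^{-\nu}$) made explicit.
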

\section{Some  Heinz type inequality related to Hadamard product}
In this section, using some ideas of \cite{aujla} and \cite{Aujla1}, we show some Heinz type inequalities.
\begin{lemma}\cite[Theorem 1.1.3]{bha2}\label{a80}
Let $A, B\in\mathcal{P}_n$ and $X\in \mathbb{M}_n$. Then the block matrix
$\left(\begin{array}{cc}
 A&X\\
 X^*&B
 \end{array}\right)$
 is positive semidefinite if and only if $A\geq XB^{-1}X^*$.
\end{lemma}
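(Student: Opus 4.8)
The plan is to reduce the block matrix to block-diagonal form by a congruence and then invoke the standard fact that congruence by an invertible matrix preserves positive semidefiniteness. Write $M=\left(\begin{array}{cc} A&X\\ X^*&B\end{array}\right)$. Since $B\in\mathcal{P}_n$ is positive definite, it is invertible and $B^{-1}$ is again Hermitian, so that $(XB^{-1})^*=B^{-1}X^*$. This invertibility of $B$ is exactly what makes the Schur complement $A-XB^{-1}X^*$ meaningful, and it is used throughout the argument.

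First I would introduce the block upper-triangular matrix $T=\left(\begin{array}{cc} I&-XB^{-1}\\ 0&I\end{array}\right)$, whose determinant equals $1$, so that $T$ is invertible, and whose adjoint is $T^*=\left(\begin{array}{cc} I&0\\ -B^{-1}X^*&I\end{array}\right)$. Next I would compute the congruence $TMT^*$ by two block multiplications: the off-diagonal blocks cancel because $X-XB^{-1}B=0$ and $X^*-BB^{-1}X^*=0$, which yields $TMT^*=\left(\begin{array}{cc} A-XB^{-1}X^*&0\\ 0&B\end{array}\right)$. This block-diagonalization is the computational heart of the proof.

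Since $T$ is invertible, $M\geq 0$ if and only if $TMT^*\geq 0$. As the right-hand side is block diagonal, it is positive semidefinite if and only if both diagonal blocks are; but $B>0$ holds by hypothesis, so the condition collapses precisely to $A-XB^{-1}X^*\geq 0$, i.e. $A\geq XB^{-1}X^*$. This settles both implications simultaneously.

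I do not expect a serious obstacle, since this is the classical Schur complement criterion; the only points demanding care are the bookkeeping in the block products and the use of Hermiticity of $B^{-1}$ to identify $T^*$. An alternative, equally short route avoids the factorization entirely: writing the quadratic form $\langle M(u\oplus v),(u\oplus v)\rangle=\langle Au,u\rangle+2\,\mathrm{Re}\langle Xv,u\rangle+\langle Bv,v\rangle$ and minimizing over $v$ by completing the square (via the substitution $w=B^{1/2}v$) produces the minimum value $\langle(A-XB^{-1}X^*)u,u\rangle$, from which the stated equivalence again follows directly.
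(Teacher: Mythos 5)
Your proof is correct: the congruence $TMT^*$ with $T=\left(\begin{smallmatrix} I&-XB^{-1}\\ 0&I\end{smallmatrix}\right)$ block-diagonalizes $M$ to $\mathrm{diag}(A-XB^{-1}X^*,\,B)$, and invertibility of $T$ together with $B>0$ gives both directions at once. The paper offers no proof of its own here (it simply cites Bhatia's book), and your argument is precisely the standard Schur-complement proof given in that reference, so there is nothing to add.
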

\begin{theorem}\label{a82}
The two variables function
\begin{align*}
H(s,t)=A^{1+s}\otimes B^{1-t}+A^{1-s}\otimes B^{1+t}
\end{align*}
is convex on $[-1,1]\times[-1,1]$ and attains its minimum at $(0,0)$ for all $A, B\in\mathcal{P}_n$.
\end{theorem}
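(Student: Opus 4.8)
The plan is to simultaneously diagonalize and reduce the operator inequality to a scalar one. Since $A,B\in\mathcal{P}_n$ admit spectral decompositions $A=\sum_{i}\lambda_i P_i$ and $B=\sum_{j}\mu_j Q_j$ with $\lambda_i,\mu_j>0$ and orthogonal projections $P_i,Q_j$, the family $\{P_i\otimes Q_j\}$ consists of mutually orthogonal projections summing to the identity of $\mathbb{M}_n\otimes\mathbb{M}_n$ and, crucially, \emph{independent of the parameters $s,t$}. Applying the functional calculus to $A^{1\pm s}$ and $B^{1\pm t}$ and expanding the tensor products, I would write
\begin{align*}
H(s,t)=\sum_{i,j}h_{ij}(s,t)\,P_i\otimes Q_j,\qquad h_{ij}(s,t)=\lambda_i^{1+s}\mu_j^{1-t}+\lambda_i^{1-s}\mu_j^{1+t}.
\end{align*}

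The next step is to pass from operators to scalars. Because the projections $P_i\otimes Q_j$ do not depend on $(s,t)$, for any points $(s_\alpha,t_\alpha)$ and nonnegative weights $c_\alpha$ summing to one one has
\begin{align*}
\sum_\alpha c_\alpha H(s_\alpha,t_\alpha)-H\Big(\sum_\alpha c_\alpha (s_\alpha,t_\alpha)\Big)=\sum_{i,j}\Big[\sum_\alpha c_\alpha h_{ij}(s_\alpha,t_\alpha)-h_{ij}\big(\textstyle\sum_\alpha c_\alpha(s_\alpha,t_\alpha)\big)\Big]P_i\otimes Q_j,
\end{align*}
so Löwner convexity of $H$ follows from ordinary convexity of each $h_{ij}$; likewise $H(s,t)-H(0,0)=\sum_{i,j}\big(h_{ij}(s,t)-h_{ij}(0,0)\big)P_i\otimes Q_j$ is positive semidefinite as soon as each scalar $h_{ij}$ has its minimum at $(0,0)$.

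It then remains to handle the scalar function, which is where the computation becomes transparent. Writing $a=\log\lambda_i$ and $b=\log\mu_j$, a direct manipulation gives
\begin{align*}
h_{ij}(s,t)=\lambda_i\mu_j\big(e^{as-bt}+e^{-(as-bt)}\big)=2\lambda_i\mu_j\cosh(as-bt).
\end{align*}
Since $(s,t)\mapsto as-bt$ is affine and $\cosh$ is convex, $h_{ij}$ is convex as the composition of a convex function with an affine map; and because $\cosh$ attains its global minimum at $0$ while $as-bt=0$ at $(0,0)$, we obtain $h_{ij}(s,t)\ge 2\lambda_i\mu_j=h_{ij}(0,0)$. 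Summing over $i,j$ yields both convexity of $H$ on $[-1,1]\times[-1,1]$ and $H(s,t)\ge H(0,0)=2\,A\otimes B$, the asserted minimum.

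I do not expect a serious obstacle. The only point requiring care is the reduction step: one must exploit that the spectral projections $P_i\otimes Q_j$ are common to all the operators $A^{1\pm s}\otimes B^{1\pm t}$ (equivalently, that $A\otimes I$ and $I\otimes B$ commute), so that Löwner convexity of the operator function is \emph{equivalent} to scalar convexity of its eigenvalues. Once this is in place, the heart of the matter is the elementary identity $h_{ij}(s,t)=2\lambda_i\mu_j\cosh(as-bt)$, which exhibits both the convexity and the location of the minimum at a glance. An alternative, should one wish to avoid the eigenvalue bookkeeping, would be to verify midpoint convexity directly through the Schur-complement criterion of Lemma \ref{a80}; but the diagonalization argument is shorter and, as a bonus, pins down the minimizer in a single stroke.
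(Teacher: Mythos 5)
Your proof is correct, but it takes a genuinely different route from the paper. The paper proves midpoint convexity directly in the L\"owner order: it invokes the Schur-complement criterion (Lemma \ref{a80}) to show that block matrices such as $\left(\begin{smallmatrix} A^{1+s_1+s_2}&A^{1+s_1}\\ A^{1+s_1}&A^{1+(s_1-s_2)}\end{smallmatrix}\right)$ are positive semidefinite, tensors them together, adds the two resulting positive blocks, and compresses with $\left(\begin{smallmatrix} I&-I\\ 0&0\end{smallmatrix}\right)$ to extract $H(s_1+s_2,t_1+t_2)+H(s_1-s_2,t_1-t_2)-2H(s_1,t_1)\geq 0$; the minimum at $(0,0)$ then comes from the symmetry $H(s,t)=H(-s,-t)$ combined with convexity. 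You instead exploit that $A\otimes I$ and $I\otimes B$ commute, expand $H(s,t)=\sum_{i,j}h_{ij}(s,t)\,P_i\otimes Q_j$ over the parameter-independent spectral projections, and reduce everything to the scalar identity $h_{ij}(s,t)=2\lambda_i\mu_j\cosh(s\log\lambda_i-t\log\mu_j)$, from which convexity (composition of $\cosh$ with an affine map) and the minimizer are immediate. Your argument is more elementary and more informative: it pins down $H(0,0)=2A\otimes B$ as a genuine global minimum without appealing to the symmetry trick, and it dispenses with the continuity-plus-midpoint-convexity step. What the paper's block-matrix method buys is alignment with the technique of Aujla used elsewhere in the section and independence from any diagonalization; but for this particular statement, where the two tensor factors commute, your reduction is the shorter and cleaner path. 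Both proofs are complete and correct.
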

\begin{proof}
Since $H$ is continuous, it is enough to prove
 $$H(s_1,t_1)\leq{1\over2}(H(s_1+s_2,t_1+t_2)+H(s_1-s_2,t_1-t_2))$$
 for all $s_1\pm s_2,t_1\pm t_2\in[0,1]$; see \cite{Aujla1}.
 For $A, B\in\mathcal{P}_n$ and $s_1\pm s_2,t_1\pm t_2\in[0,1]$ it follows from Lemma \ref{a80} that the matrices
 $\left(\begin{array}{cc}
 A^{1+s_1+s_2}&A^{1+s_1}\\
 A^{1+s_1}&A^{1+(s_1-s_2)}
 \end{array}\right)$,
 $\left(\begin{array}{cc}
 A^{1-(s_1+s_2)}&A^{1-s_1}\\
 A^{1-s_1}&A^{1-(s_1-s_2)}
 \end{array}\right)$,

 $\left(\begin{array}{cc}
 B^{1+t_1+t_2}&B^{1+t_1}\\
 B^{1+t_1}&B^{1+(t_1-t_2)}
 \end{array}\right)$
 and
 $\left(\begin{array}{cc}
 B^{1-(t_1+t_2)}&B^{1-t_1}\\
 B^{1-t_1}&B^{1-(t_1-t_2)}
 \end{array}\right)$
 are positive semidefinite. Hence the matrices
 $$X=\left(\Small{\begin{array}{cc}
 A^{1+s_1+s_2}\otimes B^{1-(t_1+t_2)}+A^{1-(s_1+s_2)}\otimes B^{1+t_1+t_2} &A^{1+s_1}\otimes B^{1-t_1}+A^{1-s_1}\otimes B^{1+t_1}\\
 A^{1+s_1}\otimes B^{1-t_1}+A^{1-s_1}\otimes B^{1+t_1}& A^{1+(s_1-s_2)}\otimes B^{1-(t_1-t_2)}+A^{1-(s_1-s_2)}\otimes B^{1+(t_1-t_2)}
\end{array}}\right)$$
 is positive semidefinite. Similarly,
 $$Y=\left(\Small{\begin{array}{cc}
 A^{1+(s_1-s_2)}\otimes B^{1+(t_1-t_2)}+A^{1-(s_1-s_2)}\otimes B^{1-(t_1-t_2)} &A^{1+s_1}\otimes B^{1-t_1}+A^{1-s_1}\otimes B^{1+t_1}\\ A^{1+s_1}\otimes B^{1-t_1}+A^{1-s_1}\otimes B^{1+t_1}&A^{1+s_1+s_2}\otimes B^{1-(t_1+t_2)}+A^{1-(s_1+s_2)}\otimes B^{1+t_1+t_2}
\end{array}}\right)$$
is positive semidefinite. Thus $$X+Y=\left(\Small{\begin{array}{cc}
 H(s_1+s_2,t_1+t_2)+H(s_1-s_2,t_1-t_2)&2H(s_1,t_1)\\
 2H(s_1,t_2)&H(s_1+s_2,t_1+t_2)+H(s_1-s_2,t_1-t_2)
 \end{array}}\right)$$
 is positive semidefinite and therefore
 $$\left(\begin{array}{cc}
 I_n&-I_n\\
 0&0
 \end{array}\right)(X+Y)\left(\begin{array}{cc}
 I_n&0\\
 -I_n&0
 \end{array}\right)$$
 is positive semidefinite. Hence $H(s_1+s_2,t_1+t_2)+H(s_1-s_2,t_1-t_2)-2H(s_1,t_1)\geq0$,
 which proves the convexity of $H$. Further note that $H(s,t)=H(-s,-t)\,\,s,t\in[0,1]$. This together with
the convexity of $H$ imply that $H$ attains its minimum at $(0,0)$.
\end{proof}
If in Theorem \ref{a82} we replace $s$, $t$, $A, B$  by ${2s-1}$, ${2t-1}$, $A^{1\over2}, B^{1\over2}$, respectively, we reach the following result.
\begin{corollary}
The two variables function
\begin{align*}
K(s,t)=A^{s}\circ B^{1-t}+A^{1-s}\circ B^{t}\,\,\,( A, B\in\mathcal{P}_n)
\end{align*}
is convex on $[0,1]\times[0,1]$ and attains its minimum at $({1\over2},{1\over2})$.
\end{corollary}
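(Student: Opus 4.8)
The plan is to deduce the corollary directly from Theorem~\ref{a82} via the affine change of variables suggested in the text, and then to pass from the tensor product to the Hadamard product using the identity $A\circ B=V^*(A\otimes B)V$ from the introduction.

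First I would carry out the substitution carefully. Writing $\phi(s,t)=(2s-1,2t-1)$, note that $\phi$ is an affine bijection of $[0,1]\times[0,1]$ onto $[-1,1]\times[-1,1]$ with $\phi(\tfrac12,\tfrac12)=(0,0)$. Replacing $A,B$ by $A^{1/2},B^{1/2}\in\mathcal{P}_n$ in the function $H$ of Theorem~\ref{a82} and using the identities $(A^{1/2})^{1+(2s-1)}=A^{s}$, $(B^{1/2})^{1-(2t-1)}=B^{1-t}$, and the two analogous ones, a direct computation gives
\[
G(s,t):=H(2s-1,2t-1)=A^{s}\otimes B^{1-t}+A^{1-s}\otimes B^{t}.
\]
Since $H$ is convex on $[-1,1]\times[-1,1]$ in the L\"owner order and $\phi$ is affine, the composition $G=H\circ\phi$ is convex on $[0,1]\times[0,1]$; moreover, since $H$ attains its minimum at $(0,0)=\phi(\tfrac12,\tfrac12)$, the function $G$ attains its minimum at $(\tfrac12,\tfrac12)$.

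Next I would transport these two properties through the Hadamard product. From $A^{s}\circ B^{1-t}=V^*(A^{s}\otimes B^{1-t})V$ and $A^{1-s}\circ B^{t}=V^*(A^{1-s}\otimes B^{t})V$ one obtains $K(s,t)=V^*G(s,t)V$. The map $M\mapsto V^*MV$ is linear and positive, that is, $P\le Q$ implies $V^*PV\le V^*QV$, so it preserves both L\"owner convexity and minimizers. Applying it to the convexity inequality for $G$ yields the convexity of $K$, and applying it to the inequality $G(s,t)\ge G(\tfrac12,\tfrac12)$ yields $K(s,t)\ge K(\tfrac12,\tfrac12)$, which is the asserted minimality.

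There is no serious obstacle here; the content is essentially bookkeeping built on Theorem~\ref{a82}. The only points requiring genuine care are (i) verifying that the affine substitution maps $[0,1]\times[0,1]$ exactly onto the domain $[-1,1]\times[-1,1]$ of Theorem~\ref{a82} and sends $(\tfrac12,\tfrac12)$ to the minimizer $(0,0)$, and (ii) observing that $M\mapsto V^*MV$ respects the L\"owner order, so that both convexity and minimality survive the compression from $\otimes$ to $\circ$.
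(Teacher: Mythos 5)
Your proof is correct and follows essentially the same route as the paper, which obtains the corollary from Theorem~\ref{a82} precisely by the substitution $s\mapsto 2s-1$, $t\mapsto 2t-1$, $A\mapsto A^{1/2}$, $B\mapsto B^{1/2}$. Your explicit verification that the compression $M\mapsto V^*MV$ preserves the L\"owner order, and hence carries convexity and the minimizer from the tensor-product function to the Hadamard-product function, is a useful detail the paper leaves implicit.
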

Aujla et al. \cite{aujla} showed that
 \begin{align*}
2|||A^{1\over2}\circ B^{1\over2}|||\leq|||A^{t}\circ B^{1-t}+A^{1-t}\circ B^{t}|||\leq|||A+B|||,
\end{align*}
where $A, B\in\mathcal{P}_n$ and $t\in[0,1]$. Now, we are ready to state our last result.
\begin{corollary}
Let $A, B\in\mathcal{P}_n$ and $s,t\in[0,1]$. Then
\begin{align*}
2|||A^{1\over2}\circ B^{1\over2}|||\leq|||A^{s}\circ B^{1-t}+A^{1-s}\circ B^{t}|||
\leq
\max\{|||(A +B)\circ I|||,|||(A\circ B)+I|||\}.
\end{align*}
\end{corollary}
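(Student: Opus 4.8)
The plan is to derive both inequalities from the preceding Corollary, which asserts that the matrix-valued map $K(s,t)=A^{s}\circ B^{1-t}+A^{1-s}\circ B^{t}$ is convex on $[0,1]\times[0,1]$ in the L\"owner order and attains its minimum at $(\tfrac12,\tfrac12)$. First note that, by the Schur product theorem, each of $A^{s}\circ B^{1-t}$ and $A^{1-s}\circ B^{t}$ is a Hadamard product of positive semidefinite matrices and hence positive semidefinite, so $K(s,t)\geq 0$ for every $(s,t)$.

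For the left-hand inequality, I would evaluate $K(\tfrac12,\tfrac12)=A^{1/2}\circ B^{1/2}+A^{1/2}\circ B^{1/2}=2\,(A^{1/2}\circ B^{1/2})$. Since $(\tfrac12,\tfrac12)$ is the L\"owner minimizer, we have $K(s,t)\geq 2\,(A^{1/2}\circ B^{1/2})\geq 0$ for all $(s,t)\in[0,1]^{2}$. Recalling that every unitarily invariant norm is monotone on the positive cone (if $0\leq P\leq Q$ then $|||P|||\leq|||Q|||$, by Weyl monotonicity together with Fan dominance), this yields $2|||A^{1/2}\circ B^{1/2}|||=|||K(\tfrac12,\tfrac12)|||\leq|||K(s,t)|||$, which is precisely the left inequality.

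For the right-hand inequality, the key step is to observe that the scalar function $(s,t)\mapsto|||K(s,t)|||$ is convex on $[0,1]^{2}$. This follows by combining the L\"owner convexity of $K$ with the monotonicity and the convexity of $|||\cdot|||$: for a convex combination of parameters, convexity of $K$ supplies a L\"owner inequality, monotonicity of the norm transfers it to the scalar level, and convexity of the norm splits the right-hand side. By Bauer's maximum principle, a convex function on the compact convex square $[0,1]^{2}$ attains its maximum at an extreme point, i.e.\ at one of the four corners. A direct computation, using $A^{0}=B^{0}=I$, the fact that $I\circ C$ is the diagonal part of $C$, and $I\circ I=I$, gives $K(0,0)=K(1,1)=(A+B)\circ I$ and $K(0,1)=K(1,0)=(A\circ B)+I$. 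Taking the larger of the two corner values yields $|||K(s,t)|||\leq\max\{|||(A+B)\circ I|||,|||(A\circ B)+I|||\}$.

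The main obstacle is the justification that the scalar map $(s,t)\mapsto|||K(s,t)|||$ is convex, so that Bauer's maximum principle applies and the maximum is pushed to the vertices; this rests on the monotonicity of unitarily invariant norms on the positive cone, which is what converts the matrix (L\"owner) convexity supplied by the Corollary into genuine scalar convexity. Everything else is either a citation of the preceding Corollary, an application of the Schur product theorem, or the routine evaluation of $K$ at the four vertices.
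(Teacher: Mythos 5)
Your proof is correct and follows essentially the same route as the paper: both derive scalar convexity of $(s,t)\mapsto|||K(s,t)|||$ from the L\"owner convexity of $K$ together with the monotonicity of unitarily invariant norms on the positive cone (what the paper cites as the Fan Dominance Theorem), then take the minimum at $(\tfrac12,\tfrac12)$ and push the maximum to the vertices of the square. You merely make explicit what the paper leaves implicit, namely the mechanism converting matrix convexity into scalar convexity and the evaluation of $K$ at the four corners.
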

\begin{proof}
Let $K(s,t)=A^{s}\circ B^{1-t}+A^{1-s}\circ B^{t}$. If we put $G(s,t)=|||K(s,t)|||$, then by the convexity of $K$ and Fan Dominance Theorem \cite[p. 58]{bha2} (see also \cite{MOS}), the function $G$ is convex on $[0,1]\times[0,1]$, and attains minimum at $({1\over2},{1\over2})$. Hence we have the first inequality. In addition, since the function $G$ is continuous and convex on $[0,1]\times[0,1]$, it follows that $G$ attains its maximum at the vertices of the square. Moreover, due to the symmetry there are only two possibilities for the maximum.
\end{proof}

\bibliographystyle{amsplain}

\end{document}